\theoremstyle{plain}
\newtheorem{theorem}{Theorem}[section]
\newtheorem{proposition}[theorem]{Proposition}
\newtheorem{corollary}[theorem]{Corollary}
\newtheorem{def-thm}[theorem]{Definition-Theorem}
\newtheorem{lemma}[theorem]{Lemma}
\theoremstyle{definition}
\newtheorem{definition}[theorem]{Definition}
\newtheorem{remark}[theorem]{Remark}
\newcommand{\RR}{\mathbb{R}}
\newcommand{\CC}{\mathbb{C}}
\DeclareMathOperator{\Ric}{Ric}
\DeclareMathOperator{\di}{div}
\DeclareMathOperator{\kod}{kod}
\begin{document}

\title[On $K_M$ and negative holomorphic sectional curvature]{On the canonical line bundle and negative holomorphic sectional curvature}

\begin{abstract} 
We prove that a smooth complex projective threefold with a K\"ahler metric of negative holomorphic sectional curvature has ample canonical line bundle. In dimensions greater than three, we prove that, under equal assumptions, the nef dimension of the canonical line bundle is maximal. With certain additional assumptions, ampleness is again obtained. The methods used come from both complex differential geometry and complex algebraic geometry.
\end{abstract}

\author{Gordon Heier, Steven S.~Y.~Lu, Bun Wong}

\address{Department of Mathematics\\University of Houston\\4800 Calhoun Road, Houston, TX 77204\\USA}
\email{heier@math.uh.edu}

\address{D\'epartment de Math\'ematiques\\Universit\'e Du Q\'ebec \`a Montr\'eal\\C.P. 8888\\Succursale Centre-Ville\\Montr\'eal, Qc H3C 3P8\\Canada}
\email{lu.steven@uqam.ca}

\address{Department of Mathematics\\UC Riverside\\900 University Avenue\\ Riverside, CA 92521\\USA}
\email{wong@math.ucr.edu}

\subjclass[2000]{14C20, 14E05, 32Q45}

\maketitle

\section{Introduction} 
In complex differential geometry the connection between the holomorphic sectional curvature tensor and the Ricci curvature tensor remains unclear. Many compact K\"ahler manifolds with negative definite Ricci curvature tensor have no metric with negative holomorphic sectional curvature. The converse is an open problem. To be precise, it is unknown whether there always exists a metric with negative definite Ricci tensor on a compact complex K\"ahler manifold with negative holomorphic sectional curvature. \par
In complex dimension two, one can answer this question affirmatively by means of the classification theory of compact complex surfaces. The details of this folklore fact can be found e.g.\ in \cite[Theorem 3.1]{Wong}. Alternatively, in Section \ref{2d_case}, we will give a short direct proof for the two-dimensional case that just uses standard algebraic geometry and a theorem of Bishop-Goldberg. This proof also gives a good idea of the general nature of the issues involved. Generalizing it to higher dimension would be very desirable, but seems difficult due the use of the Riemann-Roch theorem and the lack of information on the Chern numbers.\par
The main theorem of this paper is the following affirmative answer for algebraic threefolds.
\begin{theorem}\label{mthm}
Let $M$ be a smooth projective threefold with a K\"ahler metric of negative holomorphic sectional curvature. Then its canonical line bundle $K_M$ is ample.
\end{theorem}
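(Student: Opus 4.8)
The plan is to combine the differential-geometric consequences of negative holomorphic sectional curvature with the minimal model program and the abundance theorem, both of which are available in dimension three. As a first step I would record the hyperbolicity input. Since $M$ is compact, the holomorphic sectional curvature is a continuous negative function on the compact unit tangent bundle and is therefore bounded above by a negative constant $-\kappa$; by the Ahlfors--Schwarz/Grauert--Reckziegel lemma this makes $M$ Kobayashi hyperbolic, and hyperbolicity passes to every subvariety. In particular $M$ admits no non-constant image of $\PP^1$ (no rational curves), no elliptic curves, and no positive-dimensional subvariety of either kind. It then follows that $K_M$ is nef: by the Cone Theorem a $K_M$-negative extremal ray would be generated by a rational curve, which cannot exist.

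Next I would rule out $K_M\equiv 0$ using the standard pointwise identity that expresses the average of the holomorphic sectional curvature over the unit sphere in each tangent space as a positive multiple of the scalar curvature. Negativity of the holomorphic sectional curvature in every direction forces the scalar curvature to be negative at every point, so $\int_M \mathrm{scal}\,\omega^3<0$; as this integral is a positive multiple of $c_1(M)\cdot[\omega]^2=-K_M\cdot[\omega]^2$, we obtain $K_M\cdot[\omega]^2>0$ and hence the numerical dimension $\nu(K_M)\ge 1$. The core of the argument is to promote ``nef and non-trivial'' to ``big'', i.e.\ $\kod(M)=3$. Here I would invoke abundance for threefolds to conclude that $K_M$ is semiample, yielding a morphism $\phi\colon M\to Y$ with $K_M=\phi^*A$ for an ample class $A$ and $\dim Y=\kod(M)\ge 1$. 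If $\dim Y=2$, the general fiber is a smooth curve $F$ with $K_F=(K_M)|_F\equiv 0$ by adjunction, i.e.\ an elliptic curve, which cannot sit inside a hyperbolic manifold. If $\dim Y=1$, the general fiber is a smooth surface $F$ with $K_F\equiv 0$, hence a minimal surface of Kodaira dimension zero (abelian, K3, Enriques, or bielliptic), none of which is hyperbolic. Both cases are excluded, so $\dim Y=3$ and $K_M$ is big.

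Finally, for ampleness I would show that $\phi\colon M\to Y$ --- now birational onto the canonical model, with $K_M=\phi^*K_Y$ and $K_Y$ ample --- is an isomorphism. If it were not, it would have a positive-dimensional fiber, and every curve $C$ in that fiber would satisfy $K_M\cdot C=0$; by the standard fact that positive-dimensional fibers of a birational morphism from a smooth projective variety are covered by rational curves, $M$ would then contain a rational curve, contradicting hyperbolicity. Hence $\phi$ is an isomorphism and $K_M=K_Y$ is ample.

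I expect the bigness step to be the main obstacle. It is precisely here that one must import genuinely three-dimensional input --- the abundance theorem together with the surface classification --- in order to upgrade the ``generic'' positivity supplied by the curvature (which by itself only controls tangential, not normal, directions) into global bigness, and then into ampleness. This is also the point with no evident analogue in higher dimensions, where abundance is unavailable, which is presumably why under the same hypotheses only maximal nef dimension, rather than ampleness, can be asserted there.
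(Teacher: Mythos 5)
Your proof is correct, but it follows a genuinely different route from the paper's main argument --- in fact, it is essentially the ``alternative approach'' that the paper itself sketches in its final section (the abundance-based proof), made unconditional in dimension three where abundance is a theorem of Miyaoka and Kawamata. The paper deliberately avoids full abundance: it uses only Miyaoka's weaker nonvanishing result ($K_M$ nef on a threefold implies $\kod(M)\geq 0$), treats $1\leq\kod(M)\leq 2$ via the Iitaka fibration (a rational map, not a morphism, which still suffices since hyperbolicity is a birational obstruction here), and must then work hardest in the case $\kod(M)=0$, where it splits into the subcases $\divisor(\sigma)=\emptyset$ (excluded by the Chern--Yau argument) and $\divisor(\sigma)\neq\emptyset$ (excluded by Wilson's structure theorem on the components of $mK_V$ for threefolds with $\kappa=0$). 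Your use of semiampleness collapses all of this: the Iitaka map becomes a morphism with $K_M$ numerically trivial on fibers, the $\kod=0$ case reduces to $K_M\equiv 0$, and the fiber analysis in the remaining intermediate cases is exactly the paper's Step 3. Two smaller points of comparison: your exclusion of $c_1^{\RR}=0$ via Berger's theorem and the total scalar curvature integral (which computes a negative multiple of $K_M\cdot[\omega]^{n-1}$) is a cleaner variant of the paper's Theorem \ref{no_neg_hol_sect_curvature}, which instead invokes Yau's Ricci-flat metric and Chern's formula with a maximum principle --- both are valid and both rest on Berger's pointwise result. And your final step (positive-dimensional fibers of the birational contraction are covered by rational curves) is not elementary, but it is precisely the input the paper uses in its Step 4 via Kawamata's results on lengths of extremal rational curves, so there is no gap. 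The trade-off is clear: you import a heavier black box (threefold abundance) and get a shorter, more uniform proof; the paper uses lighter inputs (Miyaoka nonvanishing plus Wilson) at the cost of a more intricate case analysis, and its method of handling $\kod(M)=0$ is the part with no analogue in your argument.
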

Due to the work of Yau \cite{Yau} (and the Kodaira embedding theorem), it is now well known that the canonical line bundle of a compact K\"ahler manifold $M$ is ample if and only if there exists (in every K\"ahler class on $M$) a K\"ahler metric of negative definite curvature.\par
Our method is a combination of differential geometric and algebraic geometric machinery. An important step is to eliminate the case when the Kodaira dimension is equal to zero. The paper by Peternell \cite{Peternell} contains a weaker result, but under the more relaxed assumption that $M$ is Brody hyperbolic.\par
In dimension greater than three, we can establish the conclusion of Theorem \ref{mthm} only with an additional (strong) assumption:
\begin{theorem}\label{core_alb_thm}
Let $M$ be a smooth projective manifold with a K\"ahler metric of negative holomorphic sectional curvature. Assume that the essential dimension or the Albanese dimension of $M$ is greater than $\dim M - 4$. Then its canonical line bundle $K_M$ is ample.
\end{theorem}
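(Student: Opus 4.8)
The plan is to deduce ampleness from nefness in three moves: realize $M$ (of dimension $n:=\dim M$) as fibered over a base of dimension at least $n-3$ with general fiber of dimension at most three, promote the resulting fiberwise positivity to bigness of $K_M$, and finally pass from nef and big to ample. Two inputs from the curvature hypothesis, which underlie the proof of Theorem \ref{mthm}, are used throughout: since $M$ carries no rational curves, the Cone Theorem forces $K_M$ to be nef; and since the induced (possibly degenerate) K\"ahler metric on a desingularization of any subvariety $V\subseteq M$ retains negative holomorphic sectional curvature on its nondegeneracy locus, the Bishop-Goldberg integral inequality gives $K_V\cdot\omega^{\dim V-1}>0$, so that no positive-dimensional subvariety of $M$ has numerically trivial canonical bundle.

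First I would record the fibration supplied by the hypothesis. In the Albanese case, let $a\colon M\to\Alb(M)$ be the Albanese map and $Z:=a(M)$; the assumption means $\dim Z\geq n-3$, so a general fiber $F$ of $a\colon M\to Z$ has $\dim F=n-\dim Z\leq 3$. In the essential-dimension case one has instead a morphism $g\colon M\to W$ onto a variety of general type with $\dim W\geq n-3$, again with general fiber of dimension at most three. Either way $F$ is smooth, inherits a K\"ahler metric of negative holomorphic sectional curvature, and has trivial normal bundle, so $K_F=K_M|_F$; applying Theorem \ref{mthm} and its one- and two-dimensional analogues, $K_F$ is ample. In particular each fiber is of general type, so $\kappa(F)=\dim F$.

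The crux is to upgrade this to $\kappa(M)=n$, i.e.\ bigness of $K_M$. Here I would invoke additivity of the Kodaira dimension: the Iitaka-type inequality $\kappa(M)\geq\kappa(F)+\kappa(\text{base})$ holds over a base of general type by Koll\'ar and Viehweg, and over a subvariety of an abelian variety by Kawamata. In the essential-dimension case the base $W$ is of general type, so $\kappa(\text{base})=\dim W$ and $\kappa(M)\geq\dim F+\dim W=n$, giving bigness at once. In the Albanese case the image $Z$ need not itself be of general type, and this is where the negative curvature must be spent a second time: the subtori appearing in the Ueno fibration of $Z$ pull back to subvarieties of $M$ dominating abelian varieties, a configuration that has to be excluded, or absorbed into the additivity estimate, in order to conclude $\kappa(Z)=\dim Z$ and hence $\kappa(M)=n$. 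I expect this base control to be the principal obstacle, and it is precisely the reason the hypothesis is stated so as to keep the complementary fiber dimension at most three, where Theorem \ref{mthm} and the known cases of additivity and abundance apply.

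Once $K_M$ is nef and big, the Base-point-free Theorem makes it semiample and defines the birational morphism $\phi\colon M\to M_{\mathrm{can}}$ onto the canonical model, whose exceptional locus consists exactly of the curves on which $K_M$ is trivial. Since $M_{\mathrm{can}}$ has canonical singularities and $\phi$ is crepant, the positive-dimensional fibers of $\phi$ are covered by rational curves; but $M$ contains no rational curves, so $\phi$ is an isomorphism and $K_M$ is ample. Equivalently, every $K_M$-trivial curve would lie in a fiber $F$, where $K_M|_F=K_F$ is ample and hence meets it positively, a contradiction. This last step is robust; the weight of the proof sits in establishing bigness, and thus in the base-control step above.
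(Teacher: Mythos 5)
Your reduction to nefness, your analysis of the general fibers (dimension at most three, inherited negative holomorphic sectional curvature, hence ample canonical bundle by Theorem \ref{mthm} and its one- and two-dimensional analogues), and your final passage from nef and big to ample all agree with the paper. The divergence, and the problem, lies in how you try to establish bigness. In the essential-dimension case the paper needs no additivity at all: a general fiber of the core is \emph{special}, hence not of general type, which directly contradicts the ampleness of $K_F$ unless the fibers are points; so the essential dimension equals $\dim M$ and $M$ is of general type. Your additivity route is workable in spirit, but note that the base of the core is of general type only as an \emph{orbifold}, so the plain Koll\'ar--Viehweg statement over a general-type variety does not apply verbatim there.

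The genuine gap is the Albanese case, and you have located it yourself: your strategy needs $\kod(a(M))=\dim a(M)$, i.e.\ that the Albanese image is not fibered by positive-dimensional subtori, and you give no argument for this. The paper neither proves nor uses that claim; it argues differently. Assuming $\kod(M)<\dim M$, it passes to the Iitaka fibration $\pi:\tilde M\to Z$ with modification $\sigma:\tilde M\to M$, and intersects a general fiber $G$ (smooth, $\kod(G)=0$) with the strict transform $\tilde F$ of a general Albanese fiber, which is of general type by the curvature argument. The easy addition formula applied to $\pi|_{\tilde F}$ shows that a component $S$ of $G\cap\tilde F$ has maximal Kodaira dimension; Koll\'ar--Viehweg subadditivity for the general-type fibers of $G\to a(\sigma(G))$, combined with Ueno's bound $\kod\geq 0$ for subvarieties of abelian varieties, forces $\kod(S)=0$, hence $S$ is a point; therefore $G\to a(\sigma(G))$ is generically finite, and Kawamata's characterization of abelian varieties makes the Albanese map of $G$ birational, so $G$ is birational to a torus, contradicting hyperbolicity. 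Without an argument of this kind --- or an actual proof that $a(M)$ has maximal Kodaira dimension, which is a nontrivial assertion you would have to extract from the curvature hypothesis --- your proof of the Albanese half of the theorem is incomplete.
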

Without additional assumptions, we prove the following weaker positivity statement for $K_M$. 
\begin{theorem}\label{max_nef_dim_thm}
Let $M$ be a smooth projective manifold with a K\"ahler metric with negative holomorphic sectional curvature. Then the nef dimension of $M$ is equal to the dimension of $M$.
\end{theorem}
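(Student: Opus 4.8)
The plan is to realize the statement as a property of the nef reduction of $K_M$, so the first order of business is to know that $K_M$ is nef (which is needed even to speak of its nef dimension). Throughout, compactness of $M$ lets me fix a constant $\kappa>0$ with holomorphic sectional curvature $\le-\kappa$. To get nefness I would first rule out rational curves: restricting any nonconstant $\PP^1\to M$ to $\PP^1\setminus\{pt\}\cong\CC$ and applying the Ahlfors--Schwarz lemma (the pulled-back metric has holomorphic sectional curvature $\le-\kappa<0$) forces the map to be constant, so $M$ carries no rational curves. Since a $K_M$-negative extremal ray would have to be spanned by a rational curve, Mori's cone theorem then gives that $K_M$ is nef. (This is presumably already available, as it equally underlies Theorems \ref{mthm} and \ref{core_alb_thm}.)

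Granting nefness, the nef reduction theorem supplies an almost holomorphic dominant map $f\colon M\dra Y$ with $\dim Y=n(K_M)$, characterized by $K_M$ being numerically trivial on a general fiber and $K_M\cdot C>0$ for very general curves $C$ with $\dim f(C)>0$. I want $\dim Y=\dim M$, so I argue by contradiction and assume $\dim Y<\dim M$. Over a dense Zariski-open $Y_0\subseteq Y$ the map $f$ is a morphism, and by generic smoothness a general fiber $F$ is a smooth projective variety of dimension $k:=\dim M-\dim Y\ge 1$ whose normal bundle is trivial, $N_{F/M}\cong\OO_F^{\oplus\dim Y}$. Adjunction then gives $K_F=(K_M+\det N_{F/M})|_F=K_M|_F$, which is numerically trivial on $F$; in particular $K_F\cdot[\omega|_F]^{k-1}=0$.

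The contradiction is differential-geometric. As a complex submanifold of $M$, the fiber $F$ with the induced K\"ahler metric $\omega|_F$ has holomorphic sectional curvature bounded above, in each tangent direction, by that of $M$, by the Gauss equation; hence $(F,\omega|_F)$ again has holomorphic sectional curvature $\le-\kappa<0$. Berger's averaging formula expresses the pointwise average of the holomorphic sectional curvature as a positive multiple of the scalar curvature, so the scalar curvature of $(F,\omega|_F)$ is strictly negative at every point. Integrating, $\int_F\Ric(\omega|_F)\wedge(\omega|_F)^{k-1}<0$, that is $K_F\cdot[\omega|_F]^{k-1}>0$, contradicting the vanishing obtained above. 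Hence no positive-dimensional general fiber can occur and $n(K_M)=\dim M$.

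The conceptual core is the penultimate step: the decreasing property of holomorphic sectional curvature under passage to complex submanifolds propagates the hypothesis down to the fibers, while triviality of the normal bundle of a general fiber turns the nef-reduction vanishing of $K_M|_F$ into the vanishing of $K_F$ against the K\"ahler class, which Berger's formula forbids. I expect the main obstacle to be technical rather than conceptual: one must use the almost-holomorphic structure of the nef reduction carefully to secure a genuinely smooth general fiber with trivial normal bundle, so that adjunction applies with no correction term, and one must keep track of the (positive) constant in Berger's averaging formula to be certain the sign of the scalar curvature is as claimed.
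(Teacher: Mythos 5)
Your proposal is correct, and its skeleton (nefness of $K_M$ via Mori theory and the absence of rational curves, the nef reduction map, a smooth general fiber $F$ with trivial normal bundle so that adjunction gives $K_F=K_M|_F$ numerically trivial, and the curvature-decreasing property pushing the hypothesis down to $F$) coincides with the paper's. Where you genuinely diverge is in the final contradiction. The paper invokes its Theorem \ref{no_neg_hol_sect_curvature}: since $K_F$ numerically trivial forces $c_1^\RR(F)=0$, Yau's theorem produces a Ricci-flat K\"ahler metric on $F$, and Chern's formula $\tfrac12\Delta\ln u = s - Tr(\Ric)$ applied at the minimum of $\ln u$ contradicts Berger's pointwise negativity of the scalar curvature. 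You instead integrate Berger's inequality directly: $s<0$ everywhere gives $\int_F \rho\wedge(\omega|_F)^{k-1}<0$, i.e.\ $K_F\cdot[\omega|_F]^{k-1}>0$, which is incompatible with $c_1^\RR(K_F)=0$. Your route is more elementary --- it avoids the Calabi--Yau existence theorem entirely and replaces the maximum-principle argument with a one-line cohomological pairing --- and in fact it reproves the paper's Theorem \ref{no_neg_hol_sect_curvature} in the K\"ahler projective setting. The only point to make explicit is the standard fact (cited in the paper as \cite[Remark 1.1.20]{Laz_I}) that numerical triviality of $K_F$ implies $c_1^\RR(K_F)=0$ in $H^2(F,\RR)$, so that the pairing against the possibly irrational class $[\omega|_F]^{k-1}$ indeed vanishes; with that noted, your argument is complete.
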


\section{Differential geometric background}\label{diff_geom_backgr}
Let $M$ be an $n$-dimensional complex manifold with local coordinates $z_1,\ldots,z_n$. Let
\begin{equation*}
ds^2=\sum_{i,j=1}^n g_{i\bar j} dz_i\otimes d\bar{z}_j
\end{equation*}
be a hermitian metric on $M$. The components $R_{i\bar j k \bar l}$ of the curvature tensor $R$ associated with the hermitian connection are locally given by the formula
\begin{equation*}
R_{i\bar j k \bar l}=-\frac{\partial^2 g_{i\bar j}}{\partial z_k\partial \bar z _l}+\sum_{p,q=1}^n g^{p\bar q}\frac{\partial g_{i\bar p}}{\partial z_k}\frac{\partial g_{q\bar j}}{\partial \bar z_l}.
\end{equation*}
The Ricci curvature tensor $\Ric$ is defined by
\begin{equation*}
\Ric_{k\bar l} = \sum_{i,j=1}^n g^{i\bar j} R_{i\bar j k\bar l},
\end{equation*}
and the scalar curvature function $s$ is defined by 
\begin{equation*}
s = \sum_{i,j=1}^n g^{k\bar l} \Ric_{k\bar l}.
\end{equation*}
If $\xi=\sum_{i=1}^n\xi_i \frac{\partial }{\partial z_i}$ is a complex unit vector at $p\in M$, then the holomorphic sectional curvature $H(\xi)$ in the direction of $\xi$ is given by
\begin{equation*}
H(\xi)=2 \sum_{i,j,k,l=1}^n R_{i\bar j k \bar l}(p)\xi_i\bar\xi_j\xi_k\bar \xi_l.
\end{equation*}
In particular,
\begin{equation*}
H\left(\frac{\partial }{\partial z_i}(p)\right)=2 R_{i\bar i i \bar i}(p).
\end{equation*} \par
An important fact about holomorphic sectional curvature is the following. If $M'$ is a submanifold of $M$, then the holomorphic sectional curvature of $M'$ does not exceed that of $M$. To be precise, if $\xi$ is a unit tangent vector to $M'$, then
\begin{equation*}
H'(\xi)\leq H(\xi),
\end{equation*}
where $H'$ is the holomorphic sectional curvature associated to the metric on $M'$ induced by $ds^2$. For a short proof of this inequality see \cite[Lemma 1]{Wu}. Basically, the inequality is an immediate consequence of the Gauss-Codazzi equation.\par
Compact hermitian manifolds with negative holomorphic sectional curvature enjoy the remarkable property that they do not accommodate an entire complex curve. A generalized Schwarz lemma due to Ahlfors implies the following useful result.
\begin{theorem}\label{hyp_theorem}
Let $M$ be a compact hermitian manifold with holomorphic sectional curvature bounded above by a negative constant. Then there exists no non-constant holomorphic map from the complex plane into $M$ (i.e., $M$ is Brody hyperbolic).
\end{theorem}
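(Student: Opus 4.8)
The plan is to argue by contradiction via the Ahlfors-Schwarz lemma. Suppose $f\colon\CC\to M$ is a non-constant holomorphic map, and let $A>0$ be a constant with the holomorphic sectional curvature of $M$ bounded above by $-A$ everywhere (such a constant is furnished directly by the hypothesis). For each radius $R>0$ I would restrict $f$ to the disk $\Delta_R=\{z:|z|<R\}$ and consider the pulled-back, a priori degenerate, metric $f^*ds^2=\lambda(z)\,|dz|^2$, where $\lambda(z)=\sum_{i,j}g_{i\bar j}(f(z))f_i'(z)\overline{f_j'(z)}$ is the squared length of $f'(z)$.

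The first key step is to control the Gaussian curvature of $f^*ds^2$ from above. Wherever $f'(z)\neq 0$ the image is locally a one-dimensional complex submanifold of $M$, so the submanifold inequality recalled above, $H'(\xi)\leq H(\xi)$, bounds its intrinsic holomorphic sectional curvature by $-A$. For a Riemann surface the holomorphic sectional curvature coincides with the Gaussian curvature in the normalization used here, so the Gaussian curvature $K_\lambda$ of $f^*ds^2$ satisfies $K_\lambda\leq -A$ at every point where $\lambda\neq 0$; equivalently $\partial_z\partial_{\bar z}\log\lambda=-\tfrac12 K_\lambda\lambda\geq\tfrac{A}{2}\lambda$ there.

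The second key step is the comparison with the complete Poincar\'e metric $\mu(z)\,|dz|^2$ of constant Gaussian curvature $-A$ on $\Delta_R$, for which $\partial_z\partial_{\bar z}\log\mu=\tfrac{A}{2}\mu$ and $\mu\to\infty$ at $\partial\Delta_R$. I would examine the ratio $v=\lambda/\mu$: it is continuous on $\Delta_R$ and tends to $0$ at the boundary, so it attains a maximum at some interior point $z_0$. If $\lambda(z_0)=0$ then $v\equiv 0$; otherwise $\log v$ has a maximum at $z_0$, whence $0\geq\partial_z\partial_{\bar z}\log v\geq\tfrac{A}{2}(\lambda-\mu)$ at $z_0$, giving $v(z_0)\leq 1$. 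In either case $\lambda\leq\mu$ on all of $\Delta_R$, which is the desired Ahlfors-Schwarz estimate.

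Finally I would let $R\to\infty$. For each fixed $z$ the Poincar\'e coefficient satisfies $\mu(z)=\tfrac{4}{A}\cdot\tfrac{R^2}{(R^2-|z|^2)^2}\to 0$, so $\lambda(z)\leq\mu(z)$ forces $\lambda(z)=0$ for every $z\in\CC$. Hence $f'\equiv 0$ and $f$ is constant, contradicting the assumption. The one point demanding care is the possible degeneracy of $f^*ds^2$ at the zeros of $f'$; I expect the maximum-principle formulation above to absorb it automatically, since a maximum point with $\lambda(z_0)=0$ simply forces $v\equiv 0$, while at every other maximum point the curvature computation proceeds unchanged.
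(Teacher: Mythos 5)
Your argument is correct and is precisely the generalized Ahlfors--Schwarz lemma that the paper invokes without proof for this theorem (combined, as in the paper, with the curvature-decreasing property for complex submanifolds); the normalization matching Gaussian curvature with the holomorphic sectional curvature of the pulled-back metric and the handling of the zeros of $f'$ via the maximum of $\lambda/\mu$ are both handled correctly. No issues.
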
 
In particular, on a compact hermitian manifold with negative holomorphic sectional curvature, there exist no (birational images of) rational or elliptic curves.\par
Although it is in general unknown if one can find a metric with negative definite Ricci curvature on a compact hermitian manifold with negative holomorphic sectional curvature, we have the following pointwise result due to Berger \cite{Berger}.
\begin{theorem}\label{berger_theorem}
The scalar curvature of a K\"ahler metric with negative holomorphic sectional curvature is also negative.
\end{theorem}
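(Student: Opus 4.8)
The plan is to recover the scalar curvature at a point as a strictly positive constant multiple of the \emph{average} of the holomorphic sectional curvature over all unit directions, so that the pointwise negativity of $H$ immediately forces $s<0$.

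First I would fix a point $p\in M$ and choose local holomorphic coordinates so that $g_{i\bar j}(p)=\delta_{ij}$, which is possible because the metric is a positive-definite Hermitian form and hence can be brought to the identity at $p$ by a $\CC$-linear change of coordinates. With this normalization the unit tangent vectors at $p$ are exactly the points of the sphere $S^{2n-1}=\{\xi\in\CC^n:\sum_{i=1}^n|\xi_i|^2=1\}$, and both $\Ric$ and $s$ are obtained by contracting with $\delta^{i\bar j}$, so that $s(p)=\sum_{i,k=1}^n R_{i\bar i k\bar k}(p)$. I would then integrate the expression $H(\xi)=2\sum_{i,j,k,l}R_{i\bar j k\bar l}(p)\,\xi_i\bar\xi_j\xi_k\bar\xi_l$ against the rotation-invariant probability measure $d\sigma$ on $S^{2n-1}$.

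The computation of the average splits into two parts. By the $U(n)$-invariance of $d\sigma$, the moment $\int_{S^{2n-1}}\xi_i\bar\xi_j\xi_k\bar\xi_l\,d\sigma$ vanishes unless the index multiset $\{i,k\}$ equals $\{j,l\}$, and a direct evaluation gives $\int_{S^{2n-1}}|\xi_i|^4\,d\sigma=\tfrac{2}{n(n+1)}$ together with $\int_{S^{2n-1}}|\xi_i|^2|\xi_k|^2\,d\sigma=\tfrac{1}{n(n+1)}$ for $i\neq k$. The surviving terms therefore come from the patterns $i=j,\ k=l$ and $i=l,\ k=j$. Invoking the Kähler curvature symmetry $R_{i\bar k k\bar i}=R_{i\bar i k\bar k}$ lets me recombine the two off-diagonal families, while the numerical coincidence $\tfrac{2}{n(n+1)}=2\cdot\tfrac{1}{n(n+1)}$ makes the diagonal and off-diagonal coefficients match. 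The result is the clean identity
\[
\int_{S^{2n-1}}H(\xi)\,d\sigma(\xi)=\frac{4}{n(n+1)}\,s(p).
\]
Since $H<0$ at every point and in every direction, the left-hand side is strictly negative, whence $s(p)<0$; as $p$ was arbitrary, $s<0$ everywhere on $M$.

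The \emph{main obstacle} is purely bookkeeping: getting the sphere moments and, above all, the Kähler curvature symmetries exactly right, so that the two off-diagonal index families genuinely reassemble into $s(p)$ with a positive coefficient rather than some spurious combination of curvature components. Once that combinatorial identity is verified, the geometric conclusion is immediate, and the positivity of the constant $\tfrac{4}{n(n+1)}$ is what makes the averaging argument transmit negativity from $H$ to $s$.
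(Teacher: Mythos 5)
Your averaging argument is correct (the moment computations and the Kähler symmetry $R_{i\bar k k\bar i}=R_{i\bar i k\bar k}$ are exactly what is needed, and the identity $\int_{S^{2n-1}}H\,d\sigma=\tfrac{4}{n(n+1)}s(p)$ checks out with the paper's normalizations), and it is precisely the approach the paper attributes to Berger: the paper does not reprove the theorem but explicitly describes its proof as resting on the pointwise formula expressing the scalar curvature as the average of the holomorphic sectional curvature over the unit sphere in the tangent space. So you have in effect supplied the proof the paper only cites.
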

We recall that a hermitian metric $\sum_{i,j=1}^n g_{i\bar j} dz_i\otimes d\bar{z}_j$ is K\"ahler if and only if locally there exists a real-valued function $f$ such that $g_{i\bar j}=\frac{\partial^2 f}{\partial z_i\partial \bar z_j}$.\par
Berger's theorem is proven using a pointwise formula expressing the scalar curvature at a point in terms of the average holomorphic sectional curvature on the unit sphere in the tangent space at that point. It does not seem to be a strong enough statement to immediately yield results concerning the Ricci curvature. Nevertheless, it allows us to prove the following fact, which will play a crucial role in our argument to rule out the Calabi-Yau case. Since we were unable to find a proof in the literature, we provide one here.
\begin{theorem}\label{no_neg_hol_sect_curvature}
There exists no K\"ahler metric with negative holomorphic sectional curvature on a compact K\"ahler manifold $M$ with zero first real Chern class.
\end{theorem}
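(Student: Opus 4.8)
The plan is to argue by contradiction, converting the differential-geometric hypothesis into a sign statement about a topological invariant by combining Berger's pointwise result (Theorem \ref{berger_theorem}) with the standard fact that the total scalar curvature of a K\"ahler metric is controlled by the first Chern class. So suppose that $M$ carries a K\"ahler metric $\sum_{i,j} g_{i\bar j}\,dz_i\otimes d\bar z_j$ of negative holomorphic sectional curvature, and let $\omega$ denote its K\"ahler form and $\rho$ its Ricci form.

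First I would record the two ingredients. On the one hand, Berger's theorem guarantees that the scalar curvature $s$ is strictly negative at every point of the compact manifold $M$, so that
\begin{equation*}
\int_M s\,\omega^n < 0.
\end{equation*}
On the other hand, the elementary K\"ahler identity $\rho\wedge\omega^{n-1}=\frac{s}{n}\,\omega^n$ (which follows from $\operatorname{tr}_\omega\rho=s$) shows that, up to a fixed positive normalizing constant, $\int_M s\,\omega^n$ equals $\int_M\rho\wedge\omega^{n-1}$.

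The crux is that this latter integral is purely cohomological. Since $\rho$ is a closed real $(1,1)$-form representing $2\pi c_1(M)$, and since $\omega^{n-1}$ is closed, the value of $\int_M\rho\wedge\omega^{n-1}$ depends only on the de Rham class $[\rho]$; concretely, if $\rho'=\rho+d\eta$, then $\int_M(\rho-\rho')\wedge\omega^{n-1}=\int_M d(\eta\wedge\omega^{n-1})=0$ by Stokes' theorem, using $d\omega=0$. By hypothesis the first real Chern class of $M$ vanishes, so $[\rho]=0$ in $H^2(M,\mathbb{R})$ and hence $\int_M\rho\wedge\omega^{n-1}=0$. Combined with the identity of the previous paragraph, this forces $\int_M s\,\omega^n=0$, contradicting the strict inequality furnished by Berger's theorem.

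There is no deep obstacle in this argument; its entire content is supplied by Berger's theorem, which is precisely what lets us pass from the curvature hypothesis to the sign of $\int_M s\,\omega^n$. The only point demanding care is the bookkeeping of normalization constants in the identities $\rho\wedge\omega^{n-1}=\frac{s}{n}\,\omega^n$ and $[\rho]=2\pi c_1(M)$, together with the (sign-convention-dependent) verification that the constant relating $\int_M s\,\omega^n$ to $\int_M\rho\wedge\omega^{n-1}$ is positive. Since the Chern class is assumed to vanish outright, however, even the exact value of this constant is immaterial: what matters is only that $\int_M s\,\omega^n$ is a nonzero multiple of the cup product $c_1(M)\cdot[\omega]^{n-1}$, which is zero.
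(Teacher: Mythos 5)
Your argument is correct, but it takes a genuinely different route from the paper. The paper first invokes Yau's solution of the Calabi conjecture to produce a second K\"ahler metric with identically vanishing Ricci curvature, and then applies Chern's formula $\tfrac12\Delta\ln u = s - Tr(\Ric)$ for the ratio $u$ of the two volume forms, reaching a contradiction via the second-derivative test at the minimum of $\ln u$ (where $\Delta\ln u\geq 0$ but the right-hand side equals $s<0$). You instead integrate: by the trace identity $n\,\rho\wedge\omega^{n-1}=s\,\omega^n$ and Stokes' theorem, the total scalar curvature $\int_M s\,\omega^n$ is a positive multiple of $c_1(M)\cdot[\omega]^{n-1}$, which vanishes by hypothesis, contradicting the everywhere-negativity of $s$ supplied by Berger's theorem. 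Both proofs rest entirely on Berger's theorem as the bridge from the curvature hypothesis to the sign of $s$; the difference is purely in how one converts ``$s<0$ everywhere'' into a contradiction with $c_1^\RR(M)=0$. Your version is strictly more elementary --- it needs only the closedness of the Ricci form and of $\omega$, not the existence of a Ricci-flat metric --- and the normalization worries you flag are indeed immaterial since only the sign of the constant matters. The paper's maximum-principle argument is thematically consonant with the Schwarz-lemma techniques used elsewhere in the paper, but for this particular statement it uses heavier machinery than necessary.
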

\begin{proof}
We will make use of the following formula, which is a special case of a formula due to Chern. Let
\begin{equation*}
u=\frac{v_2}{v_1},
\end{equation*}
where $v_1$, $v_2$ are the respective volume forms associated to two metrics $ds^2_1,ds^2_2$ on $M$. Then \cite[Formula (3)]{Chern} reads
\begin{equation*}
\frac 1 2 \Delta \ln u = s - Tr(\Ric),
\end{equation*}
where $\Delta$ is the Laplacian operator associated with $ds^2_1$, $Tr$ denotes the trace with respect to $ds^2_1$, $s$ is the scalar curvature of $ds^2_1$, and $\Ric$ is the Ricci curvature tensor of $ds^2_2$.\par
Assume $ds^2_1$ is a K\"ahler metric with negative holomorphic sectional curvature. By Theorem \ref{berger_theorem}, the scalar curvature of $ds^2_1$ is negative. Moreover, Yau's Theorem \cite{Yau} says that there exists a K\"ahler metric $ds^2_2$ with vanishing Ricci curvature tensor.\par
Let $p$ be the point at which $\ln u$ achieves its minimum. By the second derivative test, $(\Delta \ln u)(p)\geq 0$. On the other hand, since $\Ric\equiv 0$, it follows from Chern's formula that
\begin{equation*}
\frac 1 2 (\Delta \ln u )(p) = s(p) < 0.
\end{equation*}
Contradiction.
\end{proof}
\begin{remark}
Since we will see in Step 1 of the proof Theorem \ref{mthm} that a compact manifold $M$ with a K\"ahler metric of negative holomorphic sectional curvature has nef $K_M$,
we note that Theorem \ref{no_neg_hol_sect_curvature} can be rephrased as saying that a compact manifold $M$ with a K\"ahler metric of negative holomorphic sectional curvature has strictly positive numerical dimension $\nu(M)$. Recall that the {\it numerical dimension} $\nu(M)$ is $\max\{k\in\{0,1,\ldots,\dim M\}: (c_1^\RR(M))^k\not\equiv 0\}$ (when $K_M$ is nef).
\end{remark}
We conclude this section with the following corollary to Theorem \ref{no_neg_hol_sect_curvature}.
\begin{corollary}
Let $M$ be a smooth projective manifold of Picard number one with a K\"ahler metric of negative holomorphic sectional curvature. Then $K_M$ is ample.
\end{corollary}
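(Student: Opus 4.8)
The plan is to exploit the fact that for a manifold of Picard number one the N\'eron--Severi space $N^1(M)_\RR$ is one-dimensional, so that the position of $K_M$ relative to the ample cone is governed by a single real parameter; the two results of this section then pin that parameter down to the positive range.

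First I would establish that $K_M$ is nef. By Theorem \ref{hyp_theorem} the manifold $M$ is Brody hyperbolic and in particular carries no rational curves. If $K_M$ were not nef, Mori's Cone Theorem would produce a $K_M$-negative extremal ray spanned by a rational curve, a contradiction. (This is exactly the content of Step 1 in the forthcoming proof of Theorem \ref{mthm}, specialized to the present situation.) Hence $K_M$ is nef.

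Next, since $\rho(M)=1$, fix an ample generator $H$ of $N^1(M)_\RR\cong\RR$ and write $K_M\equiv\lambda H$ with $\lambda\in\RR$. Nefness of $K_M$ forces $\lambda\geq 0$, so it remains only to exclude $\lambda=0$. If $\lambda=0$, then $K_M$ is numerically trivial, which for a projective manifold forces $c_1^\RR(K_M)=0$ in $H^2(M,\RR)$ and hence $c_1^\RR(M)=0$. Theorem \ref{no_neg_hol_sect_curvature} then rules out the existence of a K\"ahler metric of negative holomorphic sectional curvature, contrary to hypothesis. Therefore $\lambda>0$.

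Finally, with $\lambda>0$ the class $K_M\equiv\lambda H$ lies in the interior of the nef cone of the one-dimensional space $N^1(M)_\RR$, which coincides with the ample cone; equivalently, $K_M$ is numerically proportional to an ample class by a positive factor, so Kleiman's criterion yields that $K_M$ is ample. The only genuinely substantive input is the nefness step, which rests on Mori theory together with the hyperbolicity Theorem \ref{hyp_theorem}; the remaining steps are formal consequences of $\rho(M)=1$ and Theorem \ref{no_neg_hol_sect_curvature}.
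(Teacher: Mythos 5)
Your proposal is correct and follows essentially the same route as the paper: nefness of $K_M$ via Step 1 of the proof of Theorem \ref{mthm}, exclusion of numerical triviality via Theorem \ref{no_neg_hol_sect_curvature}, and the observation that with $\rho(M)=1$ a nef, numerically nontrivial class is a positive multiple of an ample class, hence ample by the Nakai--Moishezon--Kleiman criterion. The only difference is cosmetic: you make the one-parameter bookkeeping with $\lambda$ explicit, whereas the paper states it in one line.
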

\begin{proof}
We will see in Step 1 of the proof of Theorem \ref{mthm} that $K_M$ is nef. Since $K_M$ is not numerically trivial by Theorem \ref{no_neg_hol_sect_curvature}, its numerical class must be a positive multiple of the numerical class of an arbitrarily chosen ample divisor. According to the Nakai-Moishezon-Kleiman criterion \cite[Theorem 1.2.23]{Laz_I}, this shows that $K_M$ is ample.
\end{proof}

\section{Proof of Theorem \ref{mthm}} \label{proof_mthm}
The major task is to deal with the case when the Kodaira dimension of $M$, denoted by $\kod(M)$, is equal to zero. We recall the definition.
\begin{definition}
Let $M$ be an projective manifold and $K_M$ its canonical line bundle. If $H^0(M,mK_M)=0$ for all $m>0$, then $\kod(M)=-\infty$. If this is not the case, then we take $\kod(M)$ to be the maximal dimension of the images of $M$ under the rational maps furnished by non-empty linear systems $|mK_M|$ ($m>0$).
\end{definition}

We break our arguments into five steps as follows.\par
Step $1$. Recall that a line bundle $L$ on a projective manifold is {\it nef} if $L.C\geq 0$ for all curves $C$ on $M$. If the canonical line bundle $K_M$ of $M$ is not nef, then there exists a curve $C$ such that $K_M.C<0$. Then Mori's theory (\cite{Mori}, \cite{Kollar_Mori}) yields that there exists a rational curve on $M$ (regardless of $\dim M$). This is disallowed by Theorem \ref{hyp_theorem}. Hence we can assume that $K_M$ is nef from now on.\par
Step $2$. The following is a theorem of Miyaoka \cite[Theorem 1.1]{Miyaoka} that yields the existence of some pluricanonical sections.
\begin{theorem}
Let $M$ be a smooth projective threefold with nef canonical line bundle. Then $H^0(M,mK_M)$ is non-zero for some $m>0$. In other words, $\kod(M)\geq 0$.
\end{theorem}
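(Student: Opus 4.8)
The plan is to produce a nonzero pluricanonical section through the Hirzebruch--Riemann--Roch formula, using as the one genuinely deep external input Miyaoka's generic semipositivity of the cotangent sheaf. Since $K_M$ is nef, $M$ is not uniruled: a uniruled threefold carries a covering family of rational curves $C$ with $K_M\cdot C<0$ by the Miyaoka--Mori criterion, contradicting nefness. Hence Miyaoka's theorem applies and $\Omega^1_M$ is generically nef, which yields the pseudo-effectivity of the second Chern class, in particular $K_M\cdot c_2(M)\ge 0$.

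I would then split on the geometric genus $p_g=h^0(M,K_M)$. If $p_g>0$ then already $\kod(M)\ge 0$ and there is nothing to prove, so assume $p_g=0$. By Serre duality $h^3(\OO_M)=h^0(M,K_M)=p_g=0$, so the Euler characteristic of the structure sheaf reads
\[
\chi(\OO_M)=1-q+h^2(\OO_M),\qquad q:=h^1(\OO_M).
\]
On the other hand, Hirzebruch--Riemann--Roch (the Todd-genus computation) gives $\chi(\OO_M)=-\tfrac{1}{24}\,K_M\cdot c_2(M)$, which by the previous paragraph is $\le 0$. Combining the two expressions forces
\[
q\ \ge\ 1+h^2(\OO_M)\ \ge\ 1 .
\]
This is the crucial gain: vanishing of $p_g$ together with Miyaoka's inequality forces positive irregularity.

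With $q\ge 1$ the Albanese map $a\colon M\to\Alb(M)$ is nonconstant, and I would conclude by analysing it. If $a$ is generically finite onto its image $Z\subseteq\Alb(M)$, then $\kod(M)\ge\kod(Z)\ge 0$, since subvarieties of abelian varieties have nonnegative Kodaira dimension. If $\dim Z<3$, I would pass to the Iitaka/Stein fibration $M\to Z$; its general fibre $F$ is a curve or surface that is again non-uniruled (a uniruled general fibre would make $M$ uniruled), hence $\kod(F)\ge 0$ by the genus bound, respectively the Enriques classification. The known cases of the subadditivity of Kodaira dimension for fibrations over a base that is a subvariety of an abelian variety then give $\kod(M)\ge\kod(F)+\kod(Z)\ge 0$. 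In every case $\kod(M)\ge 0$, as desired. For completeness, the complementary regime $K_M^3>0$ can be settled at once: then $K_M$ is nef and big, Kawamata--Viehweg vanishing makes $h^i(M,mK_M)=0$ for $i>0,\ m\ge 2$, and $h^0(M,mK_M)=\chi(M,mK_M)\to\infty$.

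The main obstacle is the sign $K_M\cdot c_2(M)\ge 0$: everything hinges on it, and it is exactly Miyaoka's generic semipositivity (equivalently the pseudo-effectivity of $c_2$ for minimal threefolds), by far the deepest ingredient. A secondary difficulty is the final Albanese step, which quietly invokes nontrivial---but in dimension three established---instances of Iitaka's $C_{n,m}$ subadditivity conjecture together with the non-uniruledness of the general fibre; isolating exactly which cases are needed, and the numerically trivial borderline $K_M\equiv 0$ (where one instead appeals to Kawamata's theorem that a nef canonical class of numerical dimension zero is torsion), is where the careful bookkeeping lies.
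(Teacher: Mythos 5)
The paper does not actually prove this statement: it is quoted verbatim as Miyaoka's theorem \cite[Theorem 1.1]{Miyaoka} and used as a black box in Step 2 of the proof of Theorem \ref{mthm}, so there is no internal argument to compare yours against. What you have written is, in outline, a reconstruction of Miyaoka's own published proof: non-uniruledness from nefness of $K_M$, generic semipositivity of $\Omega^1_M$ and hence $K_M\cdot c_2(M)\ge 0$, the Riemann--Roch identity $\chi(\OO_M)=-\tfrac{1}{24}K_M\cdot c_2(M)\le 0$ forcing either $p_g>0$ or $q\ge 1+h^2(\OO_M)\ge 1$, and then the Albanese map combined with Ueno's theorem on subvarieties of abelian varieties and the known threefold cases of Iitaka subadditivity ($C_{3,1}$ by Kawamata, $C_{3,2}$ by Viehweg, plus the elementary generically finite case) to get $\kod(M)\ge 0$. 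That outline is sound, and the case analysis does close up. The caveats are that every step imports a deep external theorem --- which is unavoidable and is precisely why the paper cites rather than reproves the result --- and two small inaccuracies: the implication you need is the elementary direction (a free rational curve through a general point of a uniruled threefold has $K_M\cdot C\le -2$, contradicting nefness), not the Miyaoka--Mori converse you name; and the closing asides about $K_M^3>0$ and $K_M\equiv 0$ are redundant, since both situations are already absorbed by the dichotomy $p_g>0$ or $q\ge 1$.
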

Step 3. If $1\leq \kod(M)\leq 2$, we consider the Kodaira-Iitaka map $\pi:\tilde M\to Y$ (see \cite{Iitaka}). This is a surjective holomorphic map of smooth projective manifolds such that
\begin{enumerate}
\item $\tilde M$ is birational to $M$,
\item $\kod(Y)=\kod(M)$,
\item a general fiber $F$ of $\pi$ is a smooth irreducible manifold with $\kod(F)=0$.
\end{enumerate}
Firstly, if $F$ is a smooth curve with $\kod(F)=0$, then $F$ is an elliptic curve. Thus, $\tilde M$ and also its birational image $M$ are not Brody hyperbolic, which is a contradiction to Theorem \ref{hyp_theorem}.\par
Secondly, $F$ may be a smooth surface with $\kod(F)=0$. From the Enriques-Kodaira classification we know that $F$ can only be a hyperelliptic surface, a torus, a projective K3 surface or an Enriques surface. However, none of these surfaces are Brody hyperbolic. Hence $\tilde M$ and $M$ are not Brody hyperbolic, which is a contradiction. The details of this argument are as follows.\par
A hyperelliptic surface is a locally trivial fibration whose typical fiber is an elliptic curve. Thus, it is not Brody hyperbolic. \par
A torus is covered by the images of non-constant entire curves simply because its universal cover is $\CC^2$. Consequently, it cannot be Brody hyperbolic.\par
It is a theorem of Bogomolov-Mumford \cite[Appendix]{MM} that a projective K3 surface contains a pencil of singular elliptic curves. Consequently, it cannot be Brody hyperbolic.\par
Finally, the universal cover of any Enriques surface is a projective K3 surface, so $F$ cannot be Brody hyperbolic in this case either.\par
Step 4. If $\kod(M)=3$, then $K_M$ is big (and nef). The following proposition is a consequence of \cite{Kawamata_85} or \cite{Kawamata_91} or \cite[Exercise 10.1]{Debarre} and proves Theorem \ref{mthm} in this case.
\begin{proposition}\label{big_case}
Let $M$ be a complex projective Brody hyperbolic manifold with $K_M$ nef and big. Then $K_M$ is ample.
\end{proposition}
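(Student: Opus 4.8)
The plan is to realize the canonical model of $M$ as the image of an explicit morphism and to use Brody hyperbolicity to show that this morphism must be an isomorphism. First I would apply the Base Point Free Theorem of Kawamata--Shokurov: since $K_M$ is nef and big, the linear system $|mK_M|$ is base-point free for every sufficiently large and divisible $m$, and it defines a morphism $\phi\colon M\to X$ onto a normal projective variety $X\subseteq\PP^N$ satisfying $mK_M=\phi^*\calo_X(1)$, so that $K_M=\phi^*A$ for an ample $\QQ$-Cartier divisor $A$ on $X$. Because $K_M$ is big we have $\dim X=\dim M$, and, taking $m$ large enough for the fibers to be connected, $\phi$ is birational.

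Next I would reduce ampleness of $K_M$ to a statement about contracted curves. A finite birational morphism onto a normal variety is an isomorphism, and the pullback of an ample divisor under a finite morphism is ample; hence $K_M=\phi^*A$ is ample as soon as $\phi$ is finite, that is, as soon as $\phi$ contracts no curve. Conversely, if $\phi$ is not finite, its exceptional locus is nonempty and there is an irreducible curve $C\subseteq M$ mapped by $\phi$ to a point, whence $K_M\cdot C=\phi^*A\cdot C=0$. Thus it suffices to rule out such contracted curves.

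The crucial input is that the positive-dimensional fibers of a birational morphism from a smooth projective manifold are covered by rational curves. This is where I would invoke the results of Kawamata (\cite{Kawamata_85}, \cite{Kawamata_91}; see also \cite[Exercise 10.1]{Debarre}), which produce, through a bend-and-break argument applied to the contraction, a rational curve inside any nontrivial fiber. I expect this to be the main obstacle of the proof, as it depends on the delicate mechanism that manufactures rational curves in the fibers rather than on any formal property of $\phi$.

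Finally, any rational curve $C\subseteq M$ admits a non-constant morphism $\PP^1\to M$ from its normalization, and restricting to $\CC\subseteq\PP^1$ yields a non-constant holomorphic map $\CC\to M$. This contradicts the Brody hyperbolicity of $M$ guaranteed by Theorem \ref{hyp_theorem}. Therefore $\phi$ contracts no curve, it is an isomorphism, and $K_M=\phi^*A$ is ample, as claimed.
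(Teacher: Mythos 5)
Your argument is correct and is precisely the standard proof behind the references the paper itself cites (Kawamata's results and Debarre's Exercise 10.1): the paper gives no details, merely asserting the proposition is a consequence of those sources, and your write-up fleshes out exactly that route --- base point free theorem, reduction to contracted curves, Kawamata's production of rational curves in positive-dimensional fibers, contradiction with Brody hyperbolicity. You also correctly identify the rational-curve-production step as the one genuinely nontrivial input.
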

Step 5. The final  and most important case to rule out is $\kod(M)=0$. Once we have proven that all such algebraic threefolds cannot admit a K\"ahler metric with negative holomorphic sectional curvature, the proof of Theorem \ref{mthm} is complete. \par
Since $\kod(M)=0$, there exists $m>0$ with $\dim H^0(M,mK_M)=1$. Let $\sigma \in H^0(M,mK_M)\backslash\{0\}$. If $\di(\sigma) = \emptyset$, then $mK_M$ is the trivial line bundle. In particular, its first Chern class is zero. Consequently, the first real Chern class of $K_M$ is zero. This is impossible due to Theorem \ref{no_neg_hol_sect_curvature}.\par
We finally have to treat the situation when $\di(\sigma) = \sum_{i=1}^T k_iD_i$, where $T\geq 1$, $k_i \geq 1$ and $D_i$ a prime divisor. In order to do so, we need the following result due to Wilson \cite{wilson}.
\begin{theorem}
With the above notation, for a given index $i$, one of the following is true.
\begin{enumerate}
\item $D_i$ is a birational to a ruled surface.
\item $D_i$ has at worst rational double point singularities. Its desingularization $\hat D_i$ satisfies $\kod(\hat D_i)=0$.
\end{enumerate}
\end{theorem}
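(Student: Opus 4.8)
The plan is to combine adjunction on the components $D_i$ with the numerical constraints forced by $\kod(M)=0$. Fix a resolution $\rho\colon\hat D_i\to D_i$ and put $L_i:=\rho^*(K_M|_{D_i})$, which is nef since $K_M$ is nef (Step 1). The first thing I would establish is that $K_M^3=0$: since $K_M$ is nef, $K_M^3>0$ would make it big and hence force $\kod(M)=3$, contradicting $\kod(M)=0$. Expanding along $mK_M\sim\sum_j k_jD_j$ gives
\begin{equation*}
0=K_M^3=\frac{1}{m}\sum_j k_j\,(K_M|_{D_j})^2 ,
\end{equation*}
and since each $(K_M|_{D_j})^2\geq 0$ (a nef class on a surface has non-negative self-intersection) and each $k_j\geq 1$, every summand vanishes. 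In particular $L_i$ is nef with $L_i^2=0$, so $\nu(L_i)\leq 1$.

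Next I would feed this into adjunction. Restricting $mK_M\sim\sum_j k_jD_j$ to $D_i$ and writing $N:=D_i|_{D_i}$ and $R:=\sum_{j\neq i}k_j\,D_j|_{D_i}\geq 0$, I obtain $k_iN=mK_M|_{D_i}-R$. When $D_i$ carries at worst rational double points its minimal resolution is crepant, so adjunction reads $K_{\hat D_i}=L_i+\rho^*N$, and substituting the previous identity yields the $\QQ$-linear relation
\begin{equation*}
K_{\hat D_i}+\tfrac{1}{k_i}\rho^*R=\Bigl(1+\tfrac{m}{k_i}\Bigr)L_i ,
\end{equation*}
whose correction term $\tfrac1{k_i}\rho^*R$ is effective. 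Since adding an effective divisor cannot decrease the Iitaka dimension, this forces $\kod(\hat D_i)\leq\kod(\hat D_i,L_i)\leq\nu(L_i)\leq 1$, which already excludes the general type case $\kod(\hat D_i)=2$.

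The main obstacle is to rule out $\kod(\hat D_i)=1$, i.e.\ to improve the bound from $1$ to $0$. Here I would lift pluricanonical sections from $\hat D_i$ to $M$. If $\kod(\hat D_i)=1$ then $h^0(\hat D_i,tK_{\hat D_i})$ grows linearly in $t$; on the other hand, from $mK_M\geq k_iD_i$ one has $K_M+D_i\leq(1+\tfrac m{k_i})K_M$ as $\QQ$-divisors with effective difference, so together with $\kod(M)=0$ the group $H^0\bigl(M,t(K_M+D_i)\bigr)$ embeds into $H^0$ of a bounded multiple of $K_M$ and therefore stays bounded. The two statements are incompatible once the restriction map $H^0\bigl(M,t(K_M+D_i)\bigr)\to H^0\bigl(\hat D_i,tK_{\hat D_i}\bigr)$ is surjective, which I would deduce from a Kawamata--Viehweg-type vanishing $H^1\bigl(M,\calo_M(tK_M+(t-1)D_i)\bigr)=0$, the necessary positivity being supplied by the nef square-zero class $L_i$. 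I expect this vanishing to be the crux, precisely because $(K_M+D_i)|_{D_i}=K_{\hat D_i}$ need not be nef along curves inside $D_i$, so the required big-and-nef input has to be extracted with care.

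Finally I would assemble the dichotomy together with the singularity statement. The relation of the second paragraph shows that in the crepant (rational double point) case the only surviving possibilities are $\kod(\hat D_i)\in\{-\infty,0\}$, the former being exactly the ruled case (1) and the latter case (2). If instead $D_i$ has worse than rational double point singularities, the discrepancy term in adjunction acquires strictly negative coefficients, which lowers $K_{\hat D_i}$ relative to $L_i$ and drives $\kod(\hat D_i)$ to $-\infty$; thus such $D_i$ fall into case (1), while the components with $\kod(\hat D_i)=0$ are forced to have at worst rational double points, giving case (2).
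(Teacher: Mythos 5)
The paper does not prove this statement at all: it is quoted verbatim from Wilson \cite{wilson}, so your proposal has to be judged as a from-scratch proof of Wilson's theorem, and as such it has two genuine gaps. The first is the step you yourself flag as the crux: ruling out $\kod(\hat D_i)=1$ via surjectivity of $H^0\bigl(M,t(K_M+D_i)\bigr)\to H^0\bigl(\hat D_i,tK_{\hat D_i}\bigr)$, which you want to get from a vanishing $H^1\bigl(M,\calo_M(tK_M+(t-1)D_i)\bigr)=0$. Kawamata--Viehweg needs $(t-1)(K_M+D_i)$ to be nef and big, and neither holds here: you have just shown $K_M^3=0$, so nothing in sight is big, and $D_i|_{D_i}$ is typically negative, so $K_M+D_i$ is not even nef. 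The square-zero nef class $L_i$ supplies no usable positivity for such a vanishing, and without it the incompatibility between linear growth of $h^0(\hat D_i,tK_{\hat D_i})$ and boundedness of $h^0\bigl(M,t(K_M+D_i)\bigr)$ is never brought to bear. This exclusion of the case $\kod(\hat D_i)=1$ is precisely the hard content of Wilson's theorem, and it is left open.

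The second gap is the singularity half of the dichotomy. Your bound $\kod(\hat D_i)\leq\nu(L_i)\leq 1$ is derived \emph{under} the hypothesis that $D_i$ has at worst rational double points (that is where the crepant adjunction $K_{\hat D_i}=L_i+\rho^*N$ comes from), so part of what is to be proved is being assumed. In the final paragraph the claim that worse-than-canonical singularities ``drive $\kod(\hat D_i)$ to $-\infty$'' does not follow: subtracting an effective exceptional discrepancy divisor from a divisor class of Iitaka dimension at most $1$ can perfectly well leave the Kodaira dimension equal to $0$ or $1$, so no contradiction or ruledness is obtained. One genuinely has to prove that a component that is not birationally ruled is normal with only rational double points (and your argument also silently assumes $D_i$ is normal, which a prime divisor on a threefold need not be; non-normal components have a nontrivial conductor correction in adjunction). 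The correct fix, as the paper implicitly acknowledges, is simply to cite \cite{wilson}; the parts of your argument that do work (the computation $K_M^3=0$, hence $(K_M|_{D_j})^2=0$ for every $j$, and the resulting bound $\kod(\hat D_i)\leq 1$ in the canonical-singularities case) reproduce only the easy opening moves of that proof.
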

In our situation, the case (i) cannot occur, since it would constitute on obvious contradiction to the Brody hyperbolicity. As for case (ii), we saw above that $\hat D_i$ and $D_i$ cannot be Brody hyperbolic. Contradiction.
\section{Proofs of Theorems \ref{core_alb_thm} and \ref{max_nef_dim_thm}}
\subsection{The case of the essential dimension}
The essential dimension is based on Campana's construction of the {\it core} of a smooth projective manifold in \cite{campana_core} (see also \cite{Lu_long} for an alternative but equivalent construction). We think of the core as described in \cite[Theorem 3.3]{campana_core}, namely as an almost holomorphic fibration $c: M\dashrightarrow C(M)$, where $C(M)$ is an orbifold which is of general type in a suitable sense. Moreover, a general fiber $F$ of $c$ is {\it special}. We refer to \cite{campana_core} for the exact definition of special. It suffices for us to note that, in particular, special projective manifolds are not of maximal Kodaira dimension. The dimension of $C(M)$ is called the {\it essential dimension} of $M$.\par
The proof of Theorem \ref{core_alb_thm} for the case of the essential dimension is basically immediate from Theorem \ref{mthm}. We assume that the Kodaira dimension of $M$ is not maximal and derive a contradiction. If the essential dimension is at least $\dim M - 4$, then the dimension of $F$ is $1,2$, or $3$. By the properties of $c$, $F$ is not of general type. On the other hand, due to the curvature decreasing property described in Section \ref{diff_geom_backgr}, $F$ itself is a projective manifold with a K\"ahler metric of negative holomorphic sectional curvature. The case of $\dim F = 1$ being trivial, we can conclude the proof by noting that our results from Section \ref{2d_case} ($\dim F=2$) and Theorem \ref{mthm} ($\dim F = 3$) show that $K_F$ is ample. Contradiction.\par

\subsection{The case of the Albanese dimension}
Let $a:M\to T$ be the Albanese mapping of $M$ to its Albanese torus $T$ (see \cite{GH}). Then the {\it Albanese dimension} of $M$ is defined to be $\dim a(M)$. If the Albanese dimension of $M$ is greater than $\dim M -4$, then the map $a:M\to a(M)$ is a holomorphic map such that an irreducible component $F$ of a general fiber has dimension at most $3$. W.l.o.g., we assume that the dimension of $F$ is equal to $3$. By the same token as above, $\kod(F)=\dim F$. Again, we assume that the Kodaira dimension of $M$ is not maximal and derive a contradiction.\par
Let $\pi:\tilde M\to Z$ be a holomorphic version of the Kodaira-Iitaka map of $M$ and $\sigma: \tilde M\to M$ the pertaining modification of $M$. We denote by $\tilde F$ the strict transform of $F$ under $\sigma$, which also has maximal Kodaira dimension.\par
The restriction of $\pi$ to $\tilde F$ gives a holomorphic map $\pi: \tilde F\to\pi(\tilde F)$. If we denote an irreducible component of a general fiber of this map by $S$, then the Easy Addition Formula (see \cite{iitaka_gtm}) yields
\begin{equation*}
3=\kod ( \tilde F) \leq \kod (S)+\dim \pi( \tilde F).
\end{equation*}
Since clearly $3=\dim \tilde F=\dim S +\dim \pi( \tilde F)$, we see that $S$ is of maximal Kodaira dimension.\par
On the other hand, let $G$ be the fiber of $\pi:\tilde M\to Z$ such that $S$ is a component of $G\cap\tilde F$. By the standard properties of the Kodaira-Iitaka map, when $F$ and $S$ are appropriately chosen, $G$ is a smooth projective manifold with $\kod(G)=0$. By the subadditivity theorem of Koll\'ar \cite{kollar} and Viehweg \cite{viehweg} for the case of fibers of general type, we have
\begin{equation*}
0=\kod(G)\geq \kod (S)+\kod(a(\sigma(G))).
\end{equation*}
Note that $a(\sigma(G))$ may be singular, so we define its Kodaira dimension to  be that of a desingularization. By \cite[Theorem 10.9]{Ueno}, we know that $\kod(a(\sigma(G)))$ is at least $0$. Hence, since $S$ is of maximal Kodaira dimension, $0=\kod (S)=\kod(a\circ \sigma(G))$. Consequently, $S$ is a point.\par
The fact that $S$ is a point implies that $a\circ \sigma: G\to a(\sigma(G))$ is a generically finite holomorphic map. If we denote by $b$ the Albanese map of $G$, then Kawamata's \cite[Theorem 1]{kawamata_81} yields that $b$ is a fibration. By the universal property of $b$, the holomorphic map $a\circ \sigma: G\to a(\sigma(G))\subset T$ factors through $b$. However, this clearly means that the general fibers of $b$ are zero-dimensional. Consequently, $G$ is birational to its Albanese torus, whence $G$ and $\sigma(G)$ are not hyperbolic. Thus, $\tilde M$ and $M$ are not hyperbolic. Contradiction.

\subsection{The nef dimension: Proof of Theorem \ref{max_nef_dim_thm}}

Since it is very difficult to produce a section of $mK_M$ solely based on the fact that $K_M$ is nef, it is tempting to consider the nef dimension and nef reduction map introduced by Tsuji (\cite{Tsuji}, see also \cite{campana_81}, \cite{campana_94}, \cite{9aut}).\par
Let $M$ be a smooth projective manifold with nef canonical line bundle. We define two distinct points $p$ and $q$ of $M$ to be equivalent (``$p\sim q$'') if and only if there exists a connected (not necessarily irreducible) curve $C$ containing both $p$ and $q$ such that $K_M.C=0$. According to \cite{campana_94}, there is an almost holomorphic rational map $f:M\dashrightarrow Y$ with connected fibers to a normal projective variety $Y$ such that for generic points $p,q$ in $M$, $p\sim q$ if and only if $f(p)=f(q)$. The precise statement from \cite{9aut} on the nef reduction map is the following.
\begin{theorem}
Let $L$ be a nef line bundle on a normal projective variety $M$. Then there exists an almost holomorphic dominant rational map $f:M\dashrightarrow Y$ with connected fibers, called a ``reduction map'' such that
\begin{enumerate}
\item $L$ is numerically trivial on all compact fibers $F$ of $f$ with $\dim F = \dim M - \dim Y$
\item for every general point $x\in M$ and every irreducible curve $C$ passing through $x$ with $\dim f(C) > 0$, we have $L.C > 0$.
\end{enumerate}
The map $f$ is unique up to birational equivalence of $Y$.
\end{theorem}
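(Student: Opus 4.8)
The plan is to realize $f$ as the quotient of $M$ by the equivalence relation that $L$ induces on points, in direct analogy with the construction of the maximal rationally connected fibration (where one quotients by chains of rational curves). First I would set up the family of $L$-trivial curves: inside the cycle space of $M$ one takes the locally closed subset of the Chow variety parametrizing irreducible curves $C$ with $L\cdot C=0$, and extracts from it a maximal covering family $(C_t)_{t\in T}$. Declaring $p\sim q$ whenever $p$ and $q$ lie on a single connected chain assembled from members of this family, and passing to the transitive closure, defines the equivalence relation whose general classes we expect to be the fibers of $f$. The reduction map is then the rational quotient $M\dashrightarrow Y$ of this relation, whose general fiber is precisely the $\sim$-class through a general point.

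The existence of this quotient is the crux of the argument and the step I expect to be the main obstacle. A priori, two equivalent points might only be joinable by chains of unbounded length, so it is not at all clear that the equivalence classes sweep out an algebraic family, let alone that they are the fibers of a single rational map. To get past this I would invoke the general quotient theorem of Campana together with the Koll\'ar--Miyaoka--Mori chain-connectedness machinery: using the properness of the Chow variety and a dimension count on the incidence variety of chains, one shows that a general equivalence class is already swept out by chains of \emph{uniformly bounded} length, hence is the image of a single algebraic family of chains and so is an algebraic subvariety varying algebraically with the point. This boundedness reduction — trading arbitrarily long chains for chains of controlled length over a general point — is the technical heart, and it is exactly what forces the resulting quotient to be only almost holomorphic rather than everywhere defined.

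Granting the quotient, properties (i) and (ii) should follow from the construction. For (i), a general compact fiber $F$ of the expected dimension $\dim M-\dim Y$ equals an equivalence class, hence is chain-connected by members of the $L$-trivial family; since $L$ is nef and restricts trivially to every member of a connecting covering family of $F$, the bounded-chain estimate propagates triviality to all of $F$, giving $L\cdot C=0$ for every curve $C\subset F$, i.e.\ $L|_F\equiv 0$. For (ii), let $x$ be general and let $C$ pass through $x$ with $\dim f(C)>0$; if $L\cdot C$ were zero, then $C$ would be a member of the $L$-trivial family and would therefore be contracted by $f$, forcing $\dim f(C)=0$, a contradiction, so $L\cdot C>0$. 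The genericity of $x$ is used here precisely to avoid the indeterminacy locus of $f$ and the non-generic fibers.

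Finally, uniqueness up to birational equivalence of $Y$ is formal, since the relation $\sim$ is defined intrinsically from the pair $(M,L)$: any two maps satisfying (i) and (ii) must have the same general fibers — each general fiber being exactly the $\sim$-class through a general point by (i) and (ii) — and a dominant rational map with connected fibers is determined, up to birational modification of its target, by its general fibers.
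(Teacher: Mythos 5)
This theorem is not proved in the paper at all: it is quoted verbatim from the reference [BCE$^{+}$02] (``A reduction map for nef line bundles'') and used as a black box, so there is no internal proof to compare against. Your sketch does reconstruct the strategy of that cited paper --- define the equivalence relation generated by chains of curves with $L\cdot C=0$, and obtain $f$ as the almost holomorphic quotient via Campana's quotient theorem for equivalence relations generated by covering families --- so as an outline of the known argument it is pointed in the right direction, and your treatment of (ii) and of uniqueness is essentially how it goes.

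There is, however, a genuine gap at the step you dispatch with ``the bounded-chain estimate propagates triviality to all of $F$.'' Knowing that the general fiber $F$ is chain-connected by members of the $L$-trivial family does \emph{not} formally imply $L|_F\equiv 0$: an arbitrary curve $C\subset F$ need not be a member of, or a degeneration of, the connecting family, and bounded chain length says nothing about $L\cdot C$ for such $C$. This implication --- that a normal projective variety which is chain-connected by a covering family of $L$-trivial curves, with $L$ nef, has $L$ numerically trivial --- is precisely the key lemma of the cited paper, proved there by a separate induction on dimension (restricting $L$ to the subvarieties swept out by chains of increasing length and re-applying the quotient construction), and it cannot be absorbed into the boundedness statement. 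A second, smaller gap: statement (i) asserts numerical triviality on \emph{all} compact fibers of dimension $\dim M-\dim Y$, not just the general ones that coincide with equivalence classes; passing from general to special fibers of minimal dimension requires an additional limiting argument using nefness ($L\cdot C\geq 0$ combined with degeneration of cycles from nearby fibers), which your proposal does not address. For the purposes of the present paper neither gap matters, since the theorem is imported from the literature, but as a proof of the statement your outline is incomplete at exactly the two technical points that make the cited result nontrivial.
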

When we apply the above theorem with $L=K_M$, we call $n(M):=\dim Y$ the {\it nef dimension} of $M$. The advantage of dealing with the nef dimension is that its definition does not appeal to the existence of any non-trivial pluricanonical sections. \par We conclude this section with the 
\begin{proof}[Proof of Theorem \ref{max_nef_dim_thm}]
Assume that $n(M)< \dim M$, and let $k:=\dim M-n(M) > 0$. Then the general fiber $F$ of the above reduction map $f$ is a smooth projective manifold of dimension $k$ such that $K_M.C=0$ for all curves $C$ in $F$. By the adjunction formula, $K_M|_F=K_F$, and consequently $K_F$ is numerically trivial. However, this means that the first real Chern class of $F$ is trivial (see \cite[Remark 1.1.20]{Laz_I}). Due to the curvature decreasing property and Theorem \ref{no_neg_hol_sect_curvature}, we get a contradiction. (Note that if $n(M)=0$, we work with $F=M$.)
\end{proof}
\section{Alternative approaches}
\subsection{Use of the abundance conjecture}
The abundance conjecture in algebraic geometry says that on a projective manifold $M$ with nef canonical line bundle $K_M$, there exists an integer $m>0$ such that $mK_M$ is base point free. If this conjecture is true, then our Theorem 1.1 would be true in arbitrary dimension. The proof would go as follows.\par
Consider the holomorphic map $\pi: M\to Y$ furnished by the base point free linear system $|mK_M|$. If $0 < \kod(M) < \dim M$, the general fiber of $\pi$ is a smooth projective manifold $F$ with $0<\dim F < \dim M$ and $\kod(F)=0$. Moreover, due to the curvature decreasing property of submanifolds discussed in Section \ref{diff_geom_backgr}, the induced K\"ahler metric on $F$ has negative holomorphic sectional curvature. This can be ruled out by induction.\par
In the case $\kod (M) = 0$, we have $\dim H^0(M,mK_M)\in\{0,1\}$ for all $m>0$. When we take $m$ to be such that $mK_M$ is base point free, then $\dim H^0(M,mK_M)=1$, and $mK_M$ is clearly trivial. Consequently, $K_M$ has trivial first real Chern class, and one again obtains a contradiction due to Theorem \ref{no_neg_hol_sect_curvature}.\par
Finally, in the case $\kod (M) = \dim M$, one shows that $K_M$ is ample as we did in Step 4 of the proof of Theorem \ref{mthm}.
\par
Unfortunately, the abundance conjecture is open in dimension at least four, although some special cases are known, such as the case of irregular fourfolds \cite{Fujino}. Moreover, using the abundance conjecture seems to be overkill in any case, since a more specialized argument making better use of the negative curvature assumption seems much more appropriate. In this spirit, we show in Section \ref{2d_case} how to use the negative curvature assumption directly to prove our main theorem in two dimensions without any sort of fibration argument.\par

\subsection{The two-dimensional case}\label{2d_case}
Let $S$ be a complex projective surface with a K\"ahler metric of negative holomorphic sectional curvature. In this subsection, we show that $K_S$ is ample using only standard algebraic geometry and a theorem from \cite{BG}.\par
We have already noted that $K_S$ is nef. However, more is true:
\begin{lemma}\label{strictly_nef}
Let $S$ be as above. Then $K_S$ is strictly nef, i.e., $K_S.C>0$ for all curves $C$ in $S$.
\end{lemma}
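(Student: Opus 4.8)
The plan is to argue by contradiction, leveraging the three ingredients already available: the nefness of $K_S$ (noted just above the statement), the Brody hyperbolicity of Theorem \ref{hyp_theorem}, and the non-existence result of Theorem \ref{no_neg_hol_sect_curvature}, glued together by the adjunction formula and the Hodge index theorem. Since $K_S$ is nef, a reducible effective curve decomposes into irreducible components on each of which $K_S$ is already nonnegative; hence $K_S.C>0$ for all curves follows once we know it for irreducible curves. So it suffices to rule out the existence of an irreducible curve $C\subset S$ with $K_S.C=0$.

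Assume such a $C$ exists. First I would extract a genus bound. Let $\nu:\tilde C\to C\subset S$ be the normalization. By Theorem \ref{hyp_theorem}, $S$ admits no non-constant holomorphic image of $\CC$, so $\tilde C$ is neither rational nor elliptic and its geometric genus satisfies $g(\tilde C)\geq 2$. Because the arithmetic genus dominates the geometric genus ($p_a(C)=g(\tilde C)+\sum_{p}\delta_p\geq g(\tilde C)$, the $\delta_p\geq 0$ accounting for the singularities of $C$), we get $p_a(C)\geq 2$. Feeding $K_S.C=0$ into the adjunction formula $2p_a(C)-2=C^2+K_S.C$ then yields
\begin{equation*}
C^2=2p_a(C)-2\geq 2>0.
\end{equation*}

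Next I would combine $C^2>0$ with $K_S.C=0$ via the Hodge index theorem: the intersection form on $N^1(S)_{\RR}$ has exactly one positive eigenvalue, so the orthogonal complement of the class $[C]$, which has positive self-intersection, is negative definite. As $K_S$ is orthogonal to $[C]$, either $K_S$ is numerically trivial or $K_S^2<0$. The latter is incompatible with the nefness of $K_S$ (which forces $K_S^2\geq 0$), so $K_S$ must be numerically trivial. But then the first real Chern class of $S$ vanishes, precisely the configuration excluded by Theorem \ref{no_neg_hol_sect_curvature}. This contradiction shows that no curve with $K_S.C=0$ exists, i.e.\ $K_S$ is strictly nef.

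I expect the argument to be essentially formal once hyperbolicity is in hand; the only place the geometry genuinely enters beyond Theorem \ref{hyp_theorem} is the passage to $C^2>0$, and the one point deserving care is the singular case of adjunction (the inequality $p_a(C)\geq g(\tilde C)$), which I do not anticipate to be a serious obstacle. It is worth emphasizing why this algebro-geometric detour is needed at all: a direct differential-geometric attempt to bound $K_S.C=-\tfrac{1}{2\pi}\int_C \Ric$ by the curvature fails, since negativity of the holomorphic sectional curvature does \emph{not} imply negativity of the Ricci form. This is exactly the gap that the route through adjunction and the Hodge index theorem circumvents.
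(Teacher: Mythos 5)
Your proposal is correct and follows essentially the same route as the paper: hyperbolicity forces genus at least $2$, adjunction with $K_S.C=0$ gives $C^2>0$, and the Hodge index theorem combined with $K_S^2\geq 0$ (Kleiman/nefness) forces $c_1^{\RR}(K_S)=0$, contradicting Theorem \ref{no_neg_hol_sect_curvature}. Your added care about the singular case ($p_a(C)\geq g(\tilde C)$) and the reduction to irreducible curves are worthwhile clarifications of points the paper passes over silently, but they do not change the argument.
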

\begin{proof}
Assume that there is a curve $C$ with $K_S.C\leq 0$. Since $K_S$ is nef, this means $K_S.C= 0$. Because of the Brody hyperbolicity, the arithmetic genus $g(C)$ is at least $2$, and thus by the adjunction formula
\begin{equation*}
2\leq 2g(C) -2 =K_S.C+C^2=C^2.
\end{equation*}
The Hodge Index Theorem (\cite{BPV}) tells us that $C^2>0$ implies that for any class $[\omega]\in H^{1,1}_\RR(S)$ such that $\int_C \omega =0$, we have $\int_M \omega\wedge \omega\leq 0$. Moreover, $\int_M \omega\wedge \omega = 0$ if and only if $[\omega]=0$.
However, since $K_S$ is nef, we know that $K_S^2 = \int_M c_1^\RR(K_S)\wedge c_1^\RR(K_S) \geq 0$ (\cite[Theorem 1.4.9. (Kleiman's theorem)]{Laz_I}). According to the above with $[\omega]=c_1^\RR(K_S)$, this is only possible if $c_1^\RR(K_S)=0$. This is a contradiction due to Theorem \ref{no_neg_hol_sect_curvature}.
\end{proof}
According to the Nakai-Moishezon-Kleiman criterion \cite[Theorem 1.2.23]{Laz_I}, $K_S$ is ample if we can show that $K_S^2 >0$. We formulate this as a lemma.
\begin{lemma}
Let $S$ be as above. Then $K_S^2 > 0$. 
\end{lemma}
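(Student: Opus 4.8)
The plan is to argue by contradiction. Since $K_S$ is nef we already have $K_S^2\geq 0$, so it suffices to exclude $K_S^2=0$; assume this from now on. The idea is to manufacture a nonzero effective pluricanonical divisor $D\equiv 2K_S$. Because $c_1^\RR(K_S)\neq 0$ by Theorem \ref{no_neg_hol_sect_curvature}, such a $D$ cannot be the zero divisor, so the strict nefness established in Lemma \ref{strictly_nef} forces $K_S\cdot D>0$; on the other hand $K_S\cdot D=2K_S^2=0$, which is the desired contradiction. (Together with Lemma \ref{strictly_nef}, the resulting $K_S^2>0$ then yields ampleness of $K_S$ via the Nakai-Moishezon-Kleiman criterion, as noted before the lemma.)

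The one nontrivial point is the existence of the section, and this is exactly where the theorem from \cite{BG} is used. First I would invoke Bishop-Goldberg to conclude that a compact K\"ahler surface with negative holomorphic sectional curvature has positive topological Euler number, $c_2(S)>0$. Inserting this into Noether's formula $12\,\chi(\OO_S)=K_S^2+c_2(S)$ and using $K_S^2=0$ gives $12\,\chi(\OO_S)=c_2(S)>0$, hence $\chi(\OO_S)\geq 1$. Riemann-Roch for $2K_S$ reads $\chi(2K_S)=\chi(\OO_S)+\tfrac12(2K_S)\cdot(2K_S-K_S)=\chi(\OO_S)+K_S^2=\chi(\OO_S)$, so $\chi(2K_S)\geq 1$. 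To turn Euler characteristic into sections I would discard the top cohomology by Serre duality: $h^2(2K_S)=h^0(\OO_S(-K_S))$, and an effective divisor numerically equivalent to $-K_S$ would violate strict nefness (or force $c_1^\RR(K_S)=0$), so $h^2(2K_S)=0$. Therefore $h^0(2K_S)\geq\chi(2K_S)\geq 1$, producing the required section $\sigma$ and its divisor $D=\di(\sigma)\equiv 2K_S$.

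The main obstacle is producing a pluricanonical section from a purely numerical hypothesis: Riemann-Roch alone controls only $\chi(2K_S)$, which could vanish, leaving $|2K_S|$ empty and the argument stranded. The Bishop-Goldberg positivity $c_2(S)>0$ is precisely what rescues this by guaranteeing $\chi(\OO_S)\geq 1$; the surrounding steps—the vanishing of $h^2(2K_S)$ and the final intersection computation—are then immediate from strict nefness and $c_1^\RR(K_S)\neq 0$.
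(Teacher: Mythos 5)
Your proof is correct and follows essentially the same route as the paper: Bishop--Goldberg gives $c_2(S)>0$, Riemann--Roch (equivalently Noether's formula) plus the Serre-duality vanishing of $h^2(2K_S)$ produces a section of $2K_S$, and strict nefness then contradicts $K_S^2=0$. The only differences are cosmetic (you phrase the $h^2$ vanishing numerically rather than via a very ample $H$, and you explicitly handle the case of an empty divisor, which the paper leaves implicit).
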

\begin{proof}
Since we already know $K_S^2$ to be non-negative, we just need to assume $K_S^2 = 0$ and derive a contradiction.\par
Since $K_S$ is strictly nef according to Lemma \ref{strictly_nef}, there exists a very ample divisor $H$ such that $K_S.H > 0$.\par
We now write down the Riemann-Roch theorem for $2K_S$ (see \cite[p.472]{GH}):
\begin{equation}\label{RR}
h^0(2K_S)-h^1(2K_S)+h^2(2K_S)=\frac 1 {12} (K_S^2+c_2(S))+\frac 1 2 ((2K_S)^2-2K_S^2).
\end{equation}
Note that by Serre duality, $h^2(2K_S)=h^0(-K_S)$. If this number is different from zero, then there is an effective divisor $D$ in the linear system $|-K_S|$ such that $D.H \geq 0$. This is a contradiction to $D.H=-K_S.H<0$. Consequently, equation \eqref{RR} can be simplified to
\begin{equation*}
h^0(2K_S) \geq \frac 1 {12} c_2(S).
\end{equation*}
Due to the Gauss-Bonnet theorem, $c_2(S)$ is just the topological Euler characteristic of $S$. However, \cite[Theorem 1.3]{BG} says that any compact K\"ahler surface of definite (positive or negative) holomorphic sectional curvature has positive Euler characteristic. Thus we can conclude that $h^0(2K_S) > 0$. \par
Finally, to derive the contradiction, let $C\in |2K_S|$ be an effective divisor. Since $K_S$ is strictly nef by Lemma \ref{strictly_nef}, we have
\begin{equation*}
0< C.K_S=2K_S.K_S=2 K_S^2=2\cdot 0=0.
\end{equation*}
\end{proof}

\newcommand{\etalchar}[1]{$^{#1}$}

\end{document}